\newtheorem{remark}[theorem]{Remark}
\newcommand{\R}{{\mathbb R}}
\newcommand{\be}[1]{\begin{equation}\label{#1}}
\newcommand{\ee}{\end{equation}}
\renewcommand{\(}{\left(}
\renewcommand{\)}{\right)}
\newcommand{\nrm}[2]{\left\|{#1}\right\|_{#2}}
\newcommand{\ix}[1]{\int_{\R^2}{#1}\,dx}
\begin{document}
\volumeyear{}\paperID{}

\title{\Large Generalized logarithmic Hardy-Littlewood-Sobolev inequality}
\shorttitle{Generalized logarithmic HLS inequality}

\author{Jean Dolbeault\affil{1} and Xingyu Li\affil{1}}
\abbrevauthor{J.~Dolbeault and X.~Li}
\headabbrevauthor{Dolbeault, J., and Li, X.}

\address{\affilnum{1} CEREMADE (CNRS UMR n$^\circ$ 7534), PSL university, Universit\'e Paris-Dauphine,\\ Place de Lattre de Tassigny, 75775 Paris 16, France}
\correspdetails{\href{mailto:dolbeaul@ceremade.dauphine.fr}{dolbeaul@ceremade.dauphine.fr}}
\begin{abstract} 
This paper is devoted to logarithmic Hardy-Littlewood-Sobolev inequalities in the two-dimensional Euclidean space, in presence of an external potential with logarithmic growth. The coupling with the potential introduces a new parameter, with two regimes. The attractive regime reflects the standard logarithmic Hardy-Littlewood-Sobolev inequality. The second regime corresponds to a reverse inequality, with the opposite sign in the convolution term, which allows us to bound the free energy of a drift-diffusion-Poisson system from below. Our method is based on an extension of an entropy method proposed by E. Carlen, J. Carrillo and M. Loss, and on a nonlinear diffusion equation.\end{abstract}
\maketitle
\thispagestyle{empty}
\footnotetext{\emph{Keywords:} 
logarithmic Hardy-Littlewood-Sobolev inequality; drift-diffusion-Poisson equation; entropy methods; nonlinear parabolic equations}
\footnotetext{\emph{MSC 2010:} 26D10; 46E35; 35K55}
\section{Main result and motivation}\label{Sec:Intro}

On $\R^2$, let us define the \emph{density of probability} $\mu=e^{-V}$ and the \emph{external potential} $V$ by
\[
\mu(x):=\frac 1{\pi\,\big(1+|x|^2\big)^2}\quad\mbox{and}\quad V(x):=-\,\log\mu(x)=2\,\log\(1+|x|^2\)+\log\pi\quad\forall\,x\in\R^2\,.
\]
We shall denote by $\mathrm L^1_+(\R^2)$ the set of a.e. nonnegative functions in $\mathrm L^1(\R^2)$. Our main result is the following \emph{generalized logarithmic Hardy-Littlewood-Sobolev inequality}.
\begin{theorem}\label{Thm:Main} For any $\alpha\ge0$, we have that
\be{Ineq:LogHLS}
\ix{f\,\log\(\frac fM\)}+\alpha\ix{V\,f}+M\,(1-\alpha)\,\(1+\log\pi\)\ge\frac 2M\,(\alpha-1)\iint_{\R^2\times\R^2}f(x)\,f(y)\,\log|x-y|\,dx\,dy
\ee
for any function $f\in\mathrm L^1_+(\R^2)$ with $M=\ix f>0$. Moreover, the equality case is achieved by $f_\star=M\,\mu$ and~$f_\star$ is the unique optimal function for any $\alpha>0$.\end{theorem}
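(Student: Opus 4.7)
The natural structural observation is that, for fixed $f$, the functional
\[
\mathcal F_\alpha[f]:=\ix{f\,\log(f/M)}+\alpha\ix{V\,f}+M\,(1-\alpha)(1+\log\pi)-\frac{2(\alpha-1)}{M}\iint_{\R^2\times\R^2}f(x)\,f(y)\log|x-y|\,dx\,dy
\]
appearing in \eqref{Ineq:LogHLS} depends affinely on $\alpha$: a direct rearrangement gives $\mathcal F_\alpha[f]=(1-\alpha)\,\mathcal F_0[f]+\alpha\,\mathcal F_1[f]$. The two endpoint inequalities are classical. At $\alpha=0$, $\mathcal F_0[f]\ge 0$ is precisely the Carlen--Loss/Beckner logarithmic Hardy--Littlewood--Sobolev inequality on $\R^2$, with the sharp constant $M(1+\log\pi)$ that matches \eqref{Ineq:LogHLS}. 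At $\alpha=1$, since $\ix{V\,f}=-\ix{f\,\log\mu}$, one has $\mathcal F_1[f]=\ix{f\,\log(f/(M\,\mu))}$, which (after dividing by $M$) is the relative entropy of the probability density $f/M$ with respect to $\mu$; its nonnegativity is Gibbs/Jensen.

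For $\alpha\in[0,1]$ the inequality then follows immediately as a convex combination of $\mathcal F_0\ge 0$ and $\mathcal F_1\ge 0$. For $\alpha>1$ I would write $\mathcal F_\alpha[f]=\mathcal F_1[f]+(\alpha-1)\bigl(\mathcal F_1[f]-\mathcal F_0[f]\bigr)$, so it suffices to prove the monotonicity $\mathcal F_1[f]\ge\mathcal F_0[f]$, which reads
\[
\ix{V\,f}-M(1+\log\pi)\ge\frac{2}{M}\iint_{\R^2\times\R^2}f(x)\,f(y)\log|x-y|\,dx\,dy\,.
\]
A direct computation yields $\Delta V=8\pi\,\mu$, so setting $\Phi(x):=\int_{\R^2}\mu(y)\log|x-y|\,dy$ one has $\Delta(V-4\Phi)=0$; since both functions grow like $4\log|x|$ at infinity, $V-4\Phi$ is a bounded harmonic function on $\R^2$, hence a constant, easily identified as $\log\pi$ by evaluating at $x=0$ and using the $r\leftrightarrow 1/r$ symmetry of $\int\mu(y)\log|y|\,dy$. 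Substituting $V=4\Phi+\log\pi$ together with the elementary normalization $\iint_{\R^2\times\R^2}\mu(x)\mu(y)\log|x-y|\,dx\,dy=1/2$ reduces the required inequality to
\[
\iint_{\R^2\times\R^2}h(x)\,h(y)\log|x-y|\,dx\,dy\le 0,\qquad h:=\tfrac{f}{M}-\mu,\quad \ix h=0\,.
\]
This is the negative definiteness of the two-dimensional logarithmic kernel on zero-mean densities: with $\Psi:=(-\Delta)^{-1}h$, integration by parts yields $-(2\pi)^{-1}\iint h(x)\,h(y)\,\log|x-y|\,dx\,dy=\ix{|\nabla\Psi|^2}\ge 0$, the boundary terms vanishing because $\ix h=0$ forces $\Psi(x)=O(|x|^{-1})$ at infinity.

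Uniqueness for $\alpha>0$ then follows from strict convexity of the relative entropy in $\mathcal F_1$: any optimizer must saturate $\mathcal F_1[f]=0$, which by the equality case of Gibbs forces $f=M\,\mu$. The main obstacle I anticipate is the analytic justification of the logarithmic-energy manipulations when $f\in\mathrm L^1_+(\R^2)$ is only weakly regular: the double integral $\iint f(x)\,f(y)\log|x-y|\,dx\,dy$ need not be absolutely convergent, and the integration by parts underlying the negative-definiteness step requires an approximation procedure together with growth control of the type $\ix{f\log(1+|x|^2)}<\infty$, which is however already implicit in the finiteness of $\ix{V\,f}$ when $\alpha>0$. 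The Carlen--Carrillo--Loss entropy/flow strategy announced in the abstract presumably bypasses these ad-hoc regularizations by propagating the required bounds uniformly along a well-posed nonlinear diffusion whose free energy decreases to $0$ at the equilibrium $f_\star=M\,\mu$.
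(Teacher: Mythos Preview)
Your argument is correct and takes a genuinely different route from the paper's. The paper also observes the affine dependence on~$\alpha$ and notes that $\alpha\in[0,1]$ follows by convex interpolation between the classical logarithmic HLS inequality and Jensen's inequality; but it then declares that for $\alpha>1$ ``the inequality is no more the consequence of a simple interpolation'' and instead runs the nonlinear drift--diffusion $\partial_t f=\Delta\sqrt f+2\sqrt\pi\,\nabla\cdot(x\,f)$, showing that $\tfrac{d}{dt}\mathcal F_\alpha[f(t,\cdot)]$ is the negative of a Gagliardo--Nirenberg deficit plus $8\pi\alpha\int\varphi(f/\mu)\,\mu^{3/2}$ with $\varphi(t)=t^{3/2}-t-\sqrt t+1\ge 0$, and concluding by the long-time convergence $f(t,\cdot)\to f_\star$. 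Your reduction $\mathcal F_\alpha=\mathcal F_1+(\alpha-1)(\mathcal F_1-\mathcal F_0)$ together with the identity $\mathcal F_1[f]-\mathcal F_0[f]=-2M\iint h(x)\,h(y)\log|x-y|\,dx\,dy$ for $h=f/M-\mu$, reduced via $V=4\Phi+\log\pi$ and $\iint\mu\,\mu\log|x-y|=\tfrac12$, and then the conditional negative definiteness of the logarithmic kernel, is more elementary and avoids the flow machinery entirely. The trade-off is that you import the sharp $\alpha=0$ inequality as a black box, whereas the paper's flow argument treats all $\alpha\ge 0$ uniformly and in particular re-proves the Carlen--Loss case. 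Your caveat about justifying the logarithmic-energy manipulations is fair; the paper does not fully escape this either and appeals to density and truncation arguments.
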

With $\alpha=0$, the inequality is the classical \emph{logarithmic Hardy-Littlewood-Sobolev inequality}
\be{logHLS0}
\ix{f\,\log\(\frac fM\)}+\frac 2M\iint_{\R^2\times\R^2}f(x)\,f(y)\,\log|x-y|\,dx\,dy+M\,\(1+\log\pi\)\ge0\,.
\ee
In that case $f_\star$ is an optimal function as well as all functions generated by a translation and a scaling of $f_\star$. As long as the parameter $\alpha$ is in the range $0\le\alpha<1$, the coefficient of the right-hand side of~\eqref{Ineq:LogHLS} is negative and the inequality is essentially of the same nature as the one with $\alpha=0$. It can indeed be written as
\[
\ix{f\,\log\(\frac fM\)}+\alpha\ix{V\,f}+M\,(1-\alpha)\,\(1+\log\pi\)+\frac 2M\,(1-\alpha)\iint_{\R^2\times\R^2}f(x)\,f(y)\,\log|x-y|\,dx\,dy\ge0\,.
\]
For reasons that will be made clear below, we shall call this range the \emph{attractive range}.

If $\alpha=1$, the inequality is almost trivial since
\be{Jensen1}
\ix{f\,\log\(\frac fM\)}+\ix{V\,f}=\ix{f\,\log\(\frac f{f_\star}\)}\ge0
\ee
is a straightforward consequence of Jensen's inequality. Now it is clear that by adding~\eqref{logHLS0} multiplied by $(1-\alpha)$ and~\eqref{Jensen1} multiplied by $\alpha$, we recover~\eqref{Ineq:LogHLS} for any $\alpha\in[0,1]$. As a consequence~\eqref{Ineq:LogHLS} is a straightforward interpolation between~\eqref{logHLS0} and~\eqref{Jensen1} in the \emph{attractive range}.

Now, let us consider the \emph{repulsive range} $\alpha>1$. It is clear that the inequality is no more the consequence of a simple interpolation. We can also observe that the coefficient $(\alpha-1)$ in the right-hand side of~\eqref{Ineq:LogHLS} is now positive. Since
\[
G(x)=-\,\frac1{2\,\pi}\,\log|x|
\]
is the Green function associated with $-\,\Delta$ on $\R^2$, so that we can define
\[
(-\Delta)^{-1}f(x)=(G*f)(x)=-\,\frac1{2\,\pi}\int_{\R^2}\log|x-y|\,f(y)\,dy\,,
\]
it is interesting to write~\eqref{Ineq:LogHLS} as
\be{logHLSrep}
\ix{f\,\log\(\frac fM\)}+\alpha\ix{V\,f}+\frac{4\,\pi}M\,(\alpha-1)\ix{f\,(-\Delta)^{-1}f}\ge M\,(\alpha-1)\,\(1+\log\pi\)\,.
\ee
If $f$ has a sufficient decay as $|x|\to+\infty$, for instance if $f$ is compactly supported, we know that $(-\Delta)^{-1}f(x)\sim-\,\frac M{2\,\pi}\,\log|x|$ for large values of $|x|$ and as a consequence, 
\[
\alpha\,V+\frac{4\,\pi}M\,(\alpha-1)\,(-\Delta)^{-1}f\sim2\,(\alpha+1)\,\log|x|\to+\infty\quad\mbox{as}\quad|x|\to+\infty\,.
\]
In a minimization scheme, this prevents the runaway of the left-hand side in~\eqref{logHLSrep}. On the other hand, $\ix{f\,\log f}$ prevents any concentration, and this is why it can be heuristically expected that the left-hand side of~\eqref{logHLSrep} indeed admits a minimizer.

Inequality~\eqref{logHLS0} was proved in~\cite{MR1143664} by E.~Carlen and M.~Loss (also see~\cite{MR1230930}). An alternative method based on nonlinear flows was given by E.~Carlen, J.~Carrillo and M.~Loss in~\cite{MR2745814}: see Section~\ref{Sec:Proof} for a sketch of their proof. Our proof of Theorem~\ref{Thm:Main} relies on an extension of this approach which takes into account the presence of the external potential $V$. A remarkable feature of this approach is that it is insensitive to the sign of $\alpha-1$.

One of the key motivations for studying~\eqref{logHLSrep} arises from entropy methods applied to \emph{drift-diffusion-Poisson} models which, after scaling out all physical parameters, are given by
\be{DD}
\frac{\partial f}{\partial t}=\Delta f+\beta\,\nabla\cdot(f\,\nabla V)+\nabla\cdot(f\,\nabla\phi)
\ee
with a nonlinear coupling given by the \emph{Poisson} equation
\be{P}
-\,\varepsilon\,\Delta\phi=f\,.
\ee
Here $V=-\,\log\mu$ is the external \emph{confining potential} and we choose it as in the statement of Theorem~\ref{Thm:Main}, while $\beta\ge0$ is a coupling parameter with $V$, which measures the strength of the external potential. We shall consider more general potentials at the end of this paper. The coefficient $\varepsilon$ in~\eqref{P} is either $\varepsilon=-1$, which corresponds to the \emph{attractive} case, or $\varepsilon=+1$, which corresponds to the \emph{repulsive} case. In terms of applications, when $\varepsilon=-1$,~\eqref{P} is the equation for the mean field potential obtained from Newton's law of attraction in gravitation, for applications in astrophysics, or for the Keller-Segel concentration of chemo-attractant in chemotaxis. The case $\varepsilon=+1$ is used for repulsive electrostatic forces in semi-conductor physics, electrolytes, plasmas and charged particle models.

In view of \emph{entropy methods} applied to PDEs (see for instance~\cite{MR3497125}), it is natural to consider the \emph{free energy functional}
\be{FreeEnergy}
\mathcal F_\beta[f]:=\ix{f\,\log f}+\beta\ix{V\,f}+\frac12\ix{\phi\,f}
\ee
because, if $f>0$ solves~\eqref{DD}-\eqref{P} and is smooth enough, with sufficient decay properties at infinity, then
\be{EntropyProduction}
\frac d{dt}\mathcal F_\beta[f(t,\cdot)]=-\ix{f\,\left|\nabla\log f+\beta\,\nabla V+\nabla\phi\right|^2}
\ee
so that $\mathcal F_\beta$ is a Lyapunov functional. Of course, a preliminary question is to establish under which conditions $\mathcal F_\beta$ is bounded from below. The answer is given by the following result.
\begin{corollary}\label{Cor:Main} Let $M>0$. If $\varepsilon=+1$, the functional $\mathcal F_\beta$ is bounded from below and admits a minimizer on the set of the functions $f\in\mathrm L^1_+(\R^2)$ such that $\ix f=M$ if $\beta\ge1+\frac M{8\,\pi}$. It is bounded from below if $\varepsilon=-1$, $\beta\ge1-\frac M{8\,\pi}$ and $M\le8\,\pi$. If $\varepsilon=+1$, the minimizer is unique.\end{corollary}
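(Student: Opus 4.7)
The plan is to derive both lower bounds directly from Theorem~\ref{Thm:Main} by matching coefficients. Using the Green's function $G(x)=-\frac{1}{2\pi}\log|x|$ of $-\Delta$ on $\R^2$, I write $\frac12\ix{\phi f}=-\frac{1}{4\pi\varepsilon}\iint_{\R^2\times\R^2} f(x)\,f(y)\,\log|x-y|\,dx\,dy$, so the convolution term in $\mathcal F_\beta$ has exactly the structure appearing on the right-hand side of~\eqref{Ineq:LogHLS}. In the repulsive case $\varepsilon=+1$, I choose $\alpha_\star=1+M/(8\pi)$, so that $2(\alpha_\star-1)/M=1/(4\pi)$; the resulting instance of~\eqref{Ineq:LogHLS} is precisely a lower bound on $\mathcal F_{\alpha_\star}[f]$. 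The decomposition $\mathcal F_\beta[f]=\mathcal F_{\alpha_\star}[f]+(\beta-\alpha_\star)\ix{Vf}$ combined with $V\ge\log\pi>0$ gives boundedness from below for every $\beta\ge\alpha_\star$. The attractive case $\varepsilon=-1$ is symmetric: I pick $\alpha_\star=1-M/(8\pi)$, admissible in Theorem~\ref{Thm:Main} precisely when $M\le 8\pi$, and proceed the same way for $\beta\ge\alpha_\star$. At the boundary $\beta=1+M/(8\pi)$ in the repulsive case, the equality statement of Theorem~\ref{Thm:Main} already identifies $f_\star=M\mu$ as the unique minimizer.

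For $\beta>1+M/(8\pi)$ and $\varepsilon=+1$, existence will be obtained by the direct method. For a minimizing sequence $(f_n)$ with $\ix{f_n}=M$, the decomposition above yields a uniform bound on $\ix{Vf_n}$, and since $V(x)\to+\infty$ as $|x|\to\infty$ this produces tightness. Since $\tfrac12\ix{\phi_n f_n}=\tfrac12\ix{|\nabla\phi_n|^2}\ge 0$, the upper bound on $\mathcal F_\beta[f_n]$ turns into a uniform upper bound on $\ix{f_n\log f_n}$; combined with tightness, this gives uniform integrability by de la Vall\'ee Poussin, and Dunford--Pettis then extracts a subsequence converging weakly in $\mathrm L^1(\R^2)$ to some $f_\infty$ with $\ix{f_\infty}=M$. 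Uniqueness will follow from strict convexity of the functional: $f\mapsto\ix{f\log f}$ is strictly convex, $\beta\ix{Vf}$ is affine, and the quadratic form $\frac12\ix{\phi f}=\frac12\ix{|\nabla\phi|^2}$ (with $-\Delta\phi=f$) is convex for $\varepsilon=+1$.

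The hard part will be the lower semi-continuity of the convolution term $\iint\log|x-y|\,f(x)\,f(y)\,dx\,dy$ along the weakly converging subsequence: the other two contributions are standard, since $\ix{f\log f}$ is lower semi-continuous by convexity and $\ix{Vf}$ by Fatou (as $V\ge 0$), but the logarithmic kernel is singular at the diagonal and unbounded at infinity. I would split the integral into $\{|x-y|\le 1\}$, where $-\log|x-y|\ge 0$ and Fatou applies, and $\{|x-y|>1\}$, where the tightness inherited from the $V$-moment bound allows a truncation outside a large ball with uniformly small error, leaving a dominated convergence argument on a bounded product domain.
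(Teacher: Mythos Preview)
Your boundedness-from-below argument is correct and coincides with the paper's Lemmas~\ref{Lem:BdedRep} and~\ref{Lem:BdedAtt}: the same choice $\alpha_\star=1\pm M/(8\pi)$, the same splitting $\mathcal F_\beta=\mathcal F_{\alpha_\star}+(\beta-\alpha_\star)\ix{Vf}$, and the same appeal to Theorem~\ref{Thm:Main}.

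For existence and uniqueness in the repulsive case the paper does \emph{not} use the direct method. It runs the drift-diffusion-Poisson flow~\eqref{DD}--\eqref{P}, uses the entropy production identity~\eqref{EntropyProduction} to force convergence to a stationary state (this supplies existence), then passes to the dual variable $\psi=(\beta-\gamma)V+\phi$ with $\gamma=\beta-M/(8\pi)$ and observes that the Euler--Lagrange equation is the critical-point equation of the strictly convex functional $\psi\mapsto\mathcal J_{M,\gamma}[\psi]=\tfrac12\ix{|\nabla\psi|^2}+M\ix{\psi\mu}+M\log\ix{e^{-\gamma V-\psi}}$, giving uniqueness of $\psi$ up to a constant and hence of $f$.

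Your variational route is plausible but contains a genuine error that breaks two steps at once: the identity $\tfrac12\ix{\phi f}=\tfrac12\ix{|\nabla\phi|^2}$ is \emph{false} in $\R^2$ whenever $M=\ix f>0$. Since $\phi(x)\sim-\frac{M}{2\pi}\log|x|$ at infinity, one has $|\nabla\phi(x)|^2\sim\frac{M^2}{4\pi^2|x|^2}\notin\mathrm L^1(\R^2)$, so the Dirichlet integral diverges, whereas $\ix{\phi f}$ is finite and can take either sign. Consequently you cannot drop the convolution term by sign to get an upper bound on $\ix{f_n\log f_n}$, and the convexity justification for the quadratic part is invalid as written. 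Both are repairable: for the entropy bound, use the already-established uniform control of $\ix{Vf_n}$ together with $\log|x-y|\le\log(1+|x|^2)+\log(1+|y|^2)$ on $\{|x-y|>1\}$ to bound the far-field contribution, while the near-diagonal part $\{|x-y|\le1\}$ has the favorable sign; for convexity, what you actually need is positivity of the second variation on the tangent space $\{\ix g=0\}$, and for mean-zero $g$ the potential $(-\Delta)^{-1}g$ does decay at infinity, so $\ix{g\,(-\Delta)^{-1}g}=\ix{|\nabla(-\Delta)^{-1}g|^2}\ge0$ is legitimate there.
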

As we shall see in Section~\ref{Sec:ProofCorollary}, Corollary~\ref{Cor:Main} is a simple consequence of Theorem~\ref{Thm:Main}. In the case of the parabolic-elliptic Keller-Segel model, that is, with $\varepsilon=-1$ and $\beta=0$, this has been used in~\cite{MR2103197,MR2226917} to provide a sharp range of existence of the solutions to the evolution problem. In~\cite{MR3196188}, the case $\varepsilon=-1$ with a potential $V$ with quadratic growth at infinity was also considered, in the study of intermediate asymptotics of the parabolic-elliptic Keller-Segel model.

\medskip Concerning the \emph{drift-diffusion-Poisson} model~\eqref{DD}-\eqref{P} and considerations on the \emph{free energy}, in the electrostatic case, we can quote, among many others,~\cite{MR1015923,MR1058151} and subsequent papers. In the Euclidean space with confinig potentials, we shall refer to \cite{MR1115290,MR1677677,MR1777308,MR1842428}. However, as far as we know, these papers are primarily devoted to dimensions $d\ge3$ and the sharp growth condition on $V$ when $d=2$ has not been studied so far. The goal of this paper is to fill this gap. The specific choice of $V$ has been made to obtain explicit constants and optimal inequalities, but the confining potential plays a role only at infinity if we are interested in the boundedness from below of the free energy. In Section~\ref{Sec:confinement}, we shall give a result for general potentials on $\R^2$: see Theorem~\ref{Thm:confinement} for a statement.

\section{Proof of the main result}\label{Sec:Proof}

As an introduction to the key method, we briefly sketch the proof of~\eqref{logHLS0} given by E.~Carlen, J.~Carrillo and M.~Loss in~\cite{MR2745814}. The main idea is to use the nonlinear diffusion equation
\[
\frac{\partial f}{\partial t}=\Delta\sqrt f
\]
with a nonnegative initial datum $f_0$. The equation preserves the mass $M=\ix f$ and is such that
\[
\frac d{dt}\(\ix{f\,\log f}-\frac{4\,\pi}M\ix{f\((-\Delta)^{-1}f\)}\)=-\,\frac8M\(\ix{\left|\nabla f^{1/4}\right|^2}\,\ix f-\,\pi\ix{f^{3/2}}\)\,.
\]
According to~\cite{MR1940370}, the Gagliardo-Nirenberg inequality 
\be{GN}
\nrm{\nabla g}2^2\,\nrm g4^4\ge\pi\,\nrm g6^6
\ee
applied to $g=f^{1/4}$ guarantees that the right-hand side is nonpositive. By the general theory of fast diffusion equations (we refer for instance to \cite{MR2282669}), we know that the solution behaves for large values of~$t$ like a self-similar solution, the so-called Barenblatt solution, which is given by $B(t,x):=t^{-2}\,f_\star(x/t)$. As a consequence, we find that
\begin{multline*}
\ix{f_0\,\log f_0}-\frac{4\,\pi}M\ix{f_0\((-\Delta)^{-1}f_0\)}\\
\ge\lim_{t\to+\infty}\ix{B\,\log B}-\frac{4\,\pi}M\ix{B\((-\Delta)^{-1}B\)}=\ix{f_\star\,\log f_\star}-\frac{4\,\pi}M\ix{f_\star\((-\Delta)^{-1}f_\star\)}
\end{multline*}
After an elementary computation, we observe that the above inequality is exactly~\eqref{logHLS0} written for $f=f_0$.

The point is now to adapt this strategy to the case with an external potential. This justifies why we have to introduce a nonlinear diffusion equation with a drift. As we shall see below, the method is insensitive to $\alpha$ and applies when $\alpha>1$ exactly as in the case $\alpha\in(0,1)$. A natural question is whether solutions are regular enough to perform the computations below and in particular if they have a sufficient decay at infinity to allow all kinds of integrations by parts needed by the method. The answer is twofold. First, we can take an initial datum $f_0$ that is as smooth and decaying as $|x|\to+\infty$ as needed, prove the inequality and argue by density. Second, integrations by parts can be justified by an approximation scheme consisting in a truncation of the problem in larger and larger balls. We refer to~\cite{MR2282669} for regularity issues and to~\cite{MR3497125} for the truncation method. In the proof, we will therefore leave these issues aside, as they are purely technical.

\begin{proof}[\emph{\bf Proof of Theorem~\ref{Thm:Main}}] By homogeneity, we can assume that $M=1$ without loss of generality and consider the evolution equation
\[
\frac{\partial f}{\partial t}=\Delta\sqrt f+2\,\sqrt\pi\,\nabla\cdot(x\,f)\,.
\]
1) Using simple integrations by parts, we compute
\[
\ix{\big(1+\log f\big)\,\Delta\sqrt f}=-\,8\ix{\left|\nabla f^{1/4}\right|^2}
\]
and
\[
\ix{\big(1+\log f\big)\,\nabla\cdot(x\,f)}=-\ix{\frac{\nabla f}f\cdot(x\,f)}=-\ix{x\cdot\nabla f}=2\ix f=2\,.
\]
As a consequence, we obtain that
\be{Id1}
\frac d{dt}\ix{f\,\log f}=-\,8\ix{\left|\nabla f^{1/4}\right|^2}+\,8\,\pi\ix{\mu^{3/2}}
\ee
using
\[
\ix{\mu^{3/2}}=\frac1{2\,\sqrt\pi}\,.
\]
2) By elementary considerations again, we find that
\[
4\,\pi\ix{f\,(-\Delta)^{-1}\(\Delta\sqrt f\,\)}=-\,4\,\pi\ix{f^{3/2}}
\]
and
\begin{multline*}
4\,\pi\ix{\nabla\cdot(x\,f)\,(-\Delta)^{-1}f}=-\,4\,\pi\ix{x\,f\cdot\nabla(-\Delta)^{-1}f}\\
\hspace*{3cm}=2\iint_{\R^2\times\R^2}f(x)\,f(y)\,x\cdot\frac{x-y}{|x-y|^2}\,dx\,dy\\
=\iint_{\R^2\times\R^2}f(x)\,f(y)\,(x-y)\cdot\frac{x-y}{|x-y|^2}\,dx\,dy=1
\end{multline*}
where, in the last line, we exchanged the variables $x$ and $y$ and took the half sum of the two expressions. This proves that
\be{Id2}
\frac d{dt}\(4\,\pi\ix{f\((-\Delta)^{-1}f\)}\)=-\,8\,\pi\ix{\(f^{3/2}-\,\mu^{3/2}\)}\,.
\ee
3) We observe that
\[
\mu(x)=\frac 1{\pi\,\big(1+|x|^2\big)^2}=e^{-V(x)}
\]
solves
\be{Vmu}
\Delta V=-\Delta\log \mu=8\,\pi\,\mu
\ee
and, as a consequence,
\[
\ix{V\,\Delta\sqrt f}=\ix{\Delta V\,\sqrt f}=8\,\pi\ix{\mu\,\sqrt f}\,.
\]
Since
\begin{multline*}
2\,\sqrt\pi\ix{V\,\nabla\cdot(x\,f)}=-\,2\,\sqrt\pi\ix{f\,x\cdot\nabla V}=-\,8\,\sqrt\pi\ix{\frac{|x|^2}{1+|x|^2}\,f}\\
=-\,8\,\sqrt\pi+8\,\sqrt\pi\ix{\frac f{1+|x|^2}}=-\,8\,\sqrt\pi+\,8\,\pi\ix{\sqrt\mu\,f}\,,
\end{multline*}
we conclude that
\be{Id3}
\frac d{dt}\ix{f\,V}=8\,\pi\ix{\(\mu\,\sqrt f+\sqrt\mu\,f-\,2\,\mu^{3/2}\)}\,.
\ee

Let us define
\[
\mathcal F[f]:=\ix{f\,\log f}+\alpha\ix{V\,f}+(1-\alpha)\,\(1+\log\pi\)+2\,(1-\alpha)\iint_{\R^2\times\R^2}f(x)\,f(y)\,\log|x-y|\,dx\,dy\,.
\]
Collecting~\eqref{Id1},~\eqref{Id2} and~\eqref{Id3}, we find that
\[
\frac d{dt}\mathcal F[f(t,\cdot)]=-\,8\(\ix{\left|\nabla f^{1/4}\right|^2}-\,\pi\ix{f^{3/2}}\)-\,8\,\pi\,\alpha\ix{\(f^{3/2}-\mu\,\sqrt f-\sqrt\mu\,f+\mu^{3/2}\)}\,.
\]
Notice that 
\[
\ix{\(f^{3/2}-\mu\,\sqrt f-\sqrt\mu\,f+\mu^{3/2}\)}=\ix{\varphi\(\frac f\mu\)\,\mu^{3/2}}\quad\mbox{with}\quad\varphi(t):=t^{3/2}-t-\sqrt t+1
\]
and that $\varphi$ is a strictly convex function on $\R^+$ such that $\varphi(1)=\varphi'(1)=0$, so that $\varphi$ is nonnegative. On the other hand, by~\eqref{GN}, we know that 
\[
\ix{\left|\nabla f^{1/4}\right|^2}-\,\pi\ix{f^{3/2}}\ge0
\]
as in the proof of~\cite{MR2745814}. Altogether, this proves that $t\mapsto\mathcal F[f(t,\cdot)]$ is monotone nonincreasing. Hence
\[
\mathcal F[f_0]\ge\mathcal F[f(t,\cdot)]\ge\lim_{t\to+\infty}\mathcal F[f(t,\cdot)]=\mathcal F[f_\star]=0\,.
\]
This completes the proof of~\eqref{Ineq:LogHLS}.\end{proof}

\section{Consequences}\label{Sec:Csq}

\subsection{Proof of Corollary~\texorpdfstring{\ref{Cor:Main}}{Corollary}}\label{Sec:ProofCorollary}

To prove the result of Corollary~\ref{Cor:Main}, we have to establish first that the \emph{free energy} functional $\mathcal F_\beta$ is bounded from below. Instead of using standard variational methods to prove that a minimizer is achieved, we can rely on the flow associated with~\eqref{DD}-\eqref{P}.

\medskip\noindent$\bullet$ \textit{\textbf{Repulsive case.}}
Let us consider the \emph{free energy} functional defined in~\eqref{FreeEnergy} where $\phi$ is given by~\eqref{P} with $\varepsilon=+1$, \emph{i.e.}, $\phi=-\,\frac1{2\,\pi}\,\log|\cdot|*f$.
\begin{lemma}\label{Lem:BdedRep} Let $M>0$ and $\varepsilon=+1$. Then $\mathcal F_\beta$ is bounded from below on the set of the functions $f\in\mathrm L^1_+(\R^2)$ such that $\ix f=M$ if $\beta\ge1+\frac M{8\,\pi}$.\end{lemma}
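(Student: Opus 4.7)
The plan is to recognize Lemma \ref{Lem:BdedRep} as a direct consequence of Theorem \ref{Thm:Main} once the parameters are aligned correctly. With $\varepsilon=+1$, Poisson's equation gives $\phi=-\frac{1}{2\pi}\log|\cdot|*f$, so
\[
\mathcal{F}_\beta[f]=\ix{f\log f}+\beta\ix{Vf}-\frac{1}{4\pi}\iint_{\R^2\times\R^2}f(x)\,f(y)\log|x-y|\,dx\,dy.
\]
Comparing with \eqref{Ineq:LogHLS}, which (after moving the convolution term to the left-hand side and using $\int f\log(f/M)=\int f\log f-M\log M$) can be rewritten as
\[
\ix{f\log f}+\alpha\ix{Vf}-\frac{2(\alpha-1)}{M}\iint_{\R^2\times\R^2}f(x)\,f(y)\log|x-y|\,dx\,dy\ge M\log M+M(\alpha-1)(1+\log\pi),
\]
the decisive observation is that the coefficient $\frac{2(\alpha-1)}{M}$ in front of the logarithmic convolution equals $\frac{1}{4\pi}$ \emph{exactly} when $\alpha=\alpha_M:=1+\frac{M}{8\pi}$. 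This is precisely the critical threshold in the statement.

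With this choice $\alpha=\alpha_M$, Theorem \ref{Thm:Main} provides the lower bound $\mathcal{F}_{\alpha_M}[f]\ge M\log M+M(\alpha_M-1)(1+\log\pi)$. For a general $\beta\ge\alpha_M$, I would split
\[
\mathcal{F}_\beta[f]=\mathcal{F}_{\alpha_M}[f]+(\beta-\alpha_M)\ix{Vf},
\]
and observe that the remainder is nonnegative because $V(x)=2\log(1+|x|^2)+\log\pi\ge\log\pi>0$ on $\R^2$ and $f\ge0$. Combining the two contributions yields the required uniform lower bound on $\mathcal{F}_\beta$.

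There is no genuine obstacle here: the entire content of the lemma lies in the correct matching of coefficients, and the sharpness of the threshold $\beta\ge1+\frac{M}{8\pi}$ is a direct manifestation of the structure of Theorem \ref{Thm:Main} in the repulsive regime. The identity $\frac{2(\alpha-1)}{M}=\frac{1}{4\pi}$, equivalent to $M=8\pi(\alpha-1)$, is what aligns the Poisson coupling with the optimal coefficient of the generalized logarithmic Hardy-Littlewood-Sobolev inequality, and explains the appearance of the familiar $8\pi$ mass in the condition on $\beta$.
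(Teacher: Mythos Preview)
Your proof is correct and follows essentially the same approach as the paper: set $\alpha=1+\tfrac{M}{8\pi}$ so that the coefficient in front of the logarithmic convolution in Theorem~\ref{Thm:Main} matches the Poisson coupling, apply the inequality, and control the remainder $(\beta-\alpha)\int Vf$ using $\beta\ge\alpha$ and $V\ge\log\pi>0$. The only cosmetic difference is that the paper first normalizes to $g=f/M$ (mass one) before invoking Theorem~\ref{Thm:Main}, whereas you apply it directly with mass $M$; the two are equivalent.
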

\begin{proof} With $g=\frac fM$ and $\alpha=1+\frac M{8\,\pi}$, this means that
\begin{multline*}
\frac1M\,\mathcal F_\beta[f]-\log M=\ix{g\,\log g}+\beta\ix{V\,g}-\,\frac M{4\,\pi}\iint_{\R^2\times\R^2}g(x)\,g(y)\,\log|x-y|\,dx\,dy\\
=(\beta-\alpha)\ix{V\,g}+\ix{g\,\log g}+\alpha\ix{V\,g}-\,2\,(\alpha-1)\iint_{\R^2\times\R^2}g(x)\,g(y)\,\log|x-y|\,dx\,dy\\
\ge(\beta-\alpha)\ix{V\,g}-\,(1-\alpha)\,\(1+\log\pi\)
\end{multline*}
according to Theorem~\ref{Thm:Main}; the condition $\beta\ge\alpha$ is enough to prove that $\mathcal F_\beta[f]$ is bounded from below.
\end{proof}

\begin{proof}[\emph{\bf Proof of Corollary~\ref{Cor:Main} with $\varepsilon=+1$}]
Let us consider a smooth solution of~\eqref{DD}-\eqref{P}. We refer to~\cite{Nernst-Planck} for details and to~\cite{MR1842428} for similar arguments in dimension $d\ge3$. According to~\eqref{EntropyProduction}, $f$ converges as $t\to+\infty$ to a solution of
\[
\nabla\log f+\beta\,\nabla V+\nabla\phi=0\,.
\]
Notice that this already proves the existence of a stationary solution. The equation can be solved as
\[
f=M\,\frac{e^{-\beta\,V-\phi}}{\ix{e^{-\beta\,V-\phi}}}
\]
after taking into account the conservation of the mass. With~\eqref{P}, the problem is reduced to solving
\[
-\,\Delta\psi=M\(\frac{e^{-\gamma\,V-\psi}}{\ix{e^{-\gamma\,V-\psi}}}-\mu\)\,,\quad\psi=\(\beta-\gamma\)V+\phi\,,\quad\gamma=\beta-\frac M{8\,\pi}
\]
using~\eqref{Vmu}. It is a critical point of the functional $\psi\mapsto\mathcal J_{M,\gamma}[\psi]:=\frac12\ix{|\nabla\psi|^2}+M\ix{\psi\,\mu}+M\,\log\(\ix{e^{-\gamma\,V-\psi}}\)$. Such a functional is strictly convex as, for instance, in~\cite{MR1115290,MR1677677}. We conclude that $\psi$ is unique up to an additional constant.\end{proof}

\noindent$\bullet$ {\textit{\textbf{Attractive case.}}}
Let us consider the \emph{free energy} functional~\eqref{FreeEnergy} $\mathcal F_\beta$ where $\phi$ is given by~\eqref{P} with $\varepsilon=-1$, \emph{i.e.}, $\phi=\frac1{2\,\pi}\,\log|\cdot|*f$. Inspired by~\cite{MR2103197}, we have the following estimate.
\begin{lemma}\label{Lem:BdedAtt} Let $\varepsilon=-1$. Then $\mathcal F_\beta$ is bounded from below on the set of the functions $f\in\mathrm L^1_+(\R^2)$ such that $\ix f=M$ if $M\le8\,\pi$ and $\beta\ge1-\frac M{8\,\pi}$. It is not bounded from below if $M>8\,\pi$.\end{lemma}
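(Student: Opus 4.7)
The plan is to mirror the argument of Lemma~\ref{Lem:BdedRep}, exploiting the fact that when $\varepsilon=-1$ the Poisson term $\frac12\ix{\phi\,f}$ equals $+\,\frac1{4\,\pi}\iint_{\R^2\times\R^2}f(x)\,f(y)\,\log|x-y|\,dx\,dy$, i.e.\ with the \emph{opposite} sign compared to the repulsive case. Setting $g:=f/M$ so that $\ix g=1$, I would rewrite
\[
\tfrac1M\,\mathcal F_\beta[f]-\log M=\ix{g\,\log g}+\beta\ix{V\,g}+\tfrac{M}{4\,\pi}\iint_{\R^2\times\R^2}g(x)\,g(y)\,\log|x-y|\,dx\,dy,
\]
and observe that the coefficient $M/(4\,\pi)$ matches the right-hand side of~\eqref{Ineq:LogHLS} (applied to the unit-mass density $g$) exactly when $2\,(\alpha-1)=-\,M/(4\,\pi)$, i.e.\ when $\alpha=1-\frac{M}{8\,\pi}$.

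For the boundedness statement I would then apply Theorem~\ref{Thm:Main} with this value of $\alpha$. Its requirement $\alpha\ge 0$ translates precisely into the subcritical mass condition $M\le 8\,\pi$, and the theorem yields
\[
\ix{g\,\log g}+\alpha\ix{V\,g}+\tfrac{M}{4\,\pi}\iint_{\R^2\times\R^2}g(x)\,g(y)\,\log|x-y|\,dx\,dy\ge-\,(1-\alpha)\,(1+\log\pi).
\]
Since $V\ge\log\pi>0$ pointwise on $\R^2$, the residual term $(\beta-\alpha)\ix{V\,g}$ is nonnegative as soon as $\beta\ge\alpha=1-\frac{M}{8\,\pi}$, and can simply be dropped. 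Reverting to $f=M\,g$ then gives the explicit lower bound $\mathcal F_\beta[f]\ge M\,\log M-\frac{M^2}{8\,\pi}\,(1+\log\pi)$.

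For the unboundedness when $M>8\,\pi$, I would test $\mathcal F_\beta$ on the concentrating family $f_\lambda(x):=\lambda^2\,f_0(\lambda\,x)$ indexed by $\lambda>0$, where $f_0\in\mathrm L^1_+(\R^2)$ is a fixed compactly supported density of mass $M$. Standard changes of variable give
\[
\ix{f_\lambda\,\log f_\lambda}=2\,M\,\log\lambda+O(1),\qquad\iint_{\R^2\times\R^2}f_\lambda(x)\,f_\lambda(y)\,\log|x-y|\,dx\,dy=-\,M^2\,\log\lambda+O(1),
\]
while $\ix{V\,f_\lambda}=\ix{V(x/\lambda)\,f_0(x)}\to M\,\log\pi$ by dominated convergence (using that $V(x/\lambda)\le V(x)$ for $\lambda\ge 1$ and that $f_0$ is compactly supported). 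Assembling these contributions,
\[
\mathcal F_\beta[f_\lambda]=\tfrac{M\,(8\,\pi-M)}{4\,\pi}\,\log\lambda+O(1),
\]
which tends to $-\infty$ as $\lambda\to+\infty$ whenever $M>8\,\pi$.

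The main conceptual point is the identification $\alpha=1-\frac{M}{8\,\pi}$, which exposes $M=8\,\pi$ as the threshold beyond which Theorem~\ref{Thm:Main} ceases to apply; both halves of the lemma hinge on this choice. I do not anticipate a serious technical obstacle: the concentration argument is the classical one from~\cite{MR2103197}, and the boundary case $M=8\,\pi$ (where $\alpha=0$ and $\beta\ge 0$) reduces directly to~\eqref{logHLS0}.
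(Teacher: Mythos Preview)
Your proposal is correct and follows essentially the same route as the paper: normalize to $g=f/M$, apply Theorem~\ref{Thm:Main} with $\alpha=1-\frac{M}{8\pi}$ to bound the entropy--potential--convolution combination from below, and for $M>8\pi$ use a mass-preserving concentration rescaling. The paper's choice $f_\lambda(x)=\lambda^{-2}f(\lambda^{-1}x)$ with $\lambda\to0_+$ is your $f_\lambda(x)=\lambda^2 f_0(\lambda x)$ with $\lambda\to+\infty$ after the substitution $\lambda\mapsto1/\lambda$, and your explicit justification that $V\ge\log\pi>0$ (so the residual $(\beta-\alpha)\ix{V\,g}$ term is nonnegative) merely makes explicit a point the paper leaves tacit.
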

\begin{proof} With $g=\frac fM$ and $\alpha=1-\frac M{8\,\pi}$, Theorem~\ref{Thm:Main} applied to
\begin{multline*}
\frac1M\,\mathcal F_\beta[f]-\log M\\
=(\beta-\alpha)\ix{V\,g}+\ix{g\,\log g}+\alpha\ix{V\,g}+\,2\,(1-\alpha)\iint_{\R^2\times\R^2}g(x)\,g(y)\,\log|x-y|\,dx\,dy\\
\ge(\beta-\alpha)\ix{V\,g}-\,(1-\alpha)\,\(1+\log\pi\)
\end{multline*}
proves that the free energy is bounded from below if $M\le8\,\pi$ and $\beta\ge\alpha$. On the other hand, if $f_\lambda(x):=\lambda^{-2}\,f(\lambda^{-1}\,x)$ and $M>8\,\pi$, then
\[
\mathcal F_\beta[f_\lambda]\sim\,2\,M\(\frac M{8\,\pi}-1\)\log\lambda\to-\infty\quad\mbox{as}\quad\lambda\to0_+\,,
\]
which proves that $\mathcal F_\beta$ is not bounded from below.\end{proof}

\begin{proof}[\emph{\bf Proof of Corollary~\ref{Cor:Main} with $\varepsilon=-1$}] The proof goes as in the case $\beta=0$. We refer to~\cite{MR2226917} and leave details to the reader.\end{proof}

\begin{remark} Let us notice that $\mathcal F_\beta$ is unbounded from below if $\beta<0$. This follows from the observation that $\lim_{|y|\to\infty}\mathcal F_\beta[f_y]=-\,\infty$ where $f_y(x)=f(x+y)$ for any admissible $f$.\end{remark}

\subsection{Duality}\label{Sec:Onofri}

When $\alpha>1$, we can write a first inequality by considering the \emph{repulsive case} in the proof of Corollary~\ref{Cor:Main} and observing that
\[
\mathcal J_{M,\gamma}[\psi]\ge\min\mathcal J_{M,\gamma}
\]
where $\psi\in\mathrm W^{2,1}_{\rm{loc}}(\R^2)$ is such that $\ix{(\Delta\psi)}=0$ and the minimum is taken on the same set of functions.

When $\alpha\in[0,1)$, it is possible to argue by duality as in~\cite[Section~2]{MR2996772}. Since $f_\star$ realizes the equality case in~\eqref{Ineq:LogHLS}, we know that
\[
\ix{f_\star\,\log\(\frac{f_\star}M\)}+\alpha\ix{V\,f_\star}+M\,(1-\alpha)\,\(1+\log\pi\)=\frac 2M\,(\alpha-1)\iint_{\R^2\times\R^2}f_\star(x)\,f_\star(y)\,\log|x-y|\,dx\,dy
\]
and, using the fact that $f_\star$ is a critical point of the difference of the two sides of~\eqref{Ineq:LogHLS}, we also have that
\[
\ix{\log\(\frac f{f_\star}\)(f-f_\star)}+\alpha\ix{V\,(f-f_\star)}=\frac 4M\,(\alpha-1)\iint_{\R^2\times\R^2}\big(f(x)-f_\star(x)\big)\,f_\star(y)\,\log|x-y|\,dx\,dy\,.
\]
By subtracting the first identity to~\eqref{Ineq:LogHLS} and adding the second identity, we can rephrase~\eqref{Ineq:LogHLS} as
\[
\mathcal F_{(1)}[f]:=\ix{f\,\log\(\frac f{f_\star}\)}\ge\frac{4\,\pi}M\,(1-\alpha)\int_{\R^2}(f-f_\star)\,(-\Delta)^{-1}(f-f_\star)\,dx:=\mathcal F_{(2)}[f]\,.
\]
Let us consider the Legendre transform
\[
\mathcal F_{(i)}^*[g]:=\sup_f\(\ix{g\,f}-\mathcal F_{(i)}[f]\)
\]
where the supremum is restricted to the set of the functions $f\in\mathrm L^1_+(\R^2)$ such that $M=\ix f$. After taking into account the Lagrange multipliers associated with the mass constraint, we obtain that
\[
M\,\log\(\ix{e^{\,g-\,V}}\)=\mathcal F_{(1)}^*[g]\le\frac M{16\,\pi\,(1-\alpha)}\ix{|\nabla g|^2}+M\ix{g\,e^{-V}}=\mathcal F_{(2)}^*[g]\,.
\]
We can get rid of $M$ by homogeneity and recover the standard Euclidean form of the Onofri inequality in the limit case as $\alpha\to0_+$, which is clearly the sharpest one for all possible $\alpha\in[0,1)$.

\subsection{Extension to general confining potentials with critical asymptotic growth}\label{Sec:confinement}

As a concluding observation, let us consider a general potential $W$ on $\R^2$ such that
\be{W}\tag{$\mathcal H_W$}
W\in C(\R^2)\quad\mbox{and}\quad\lim_{|x|\to+\infty}\frac{W(x)}{V(x)}=\beta
\ee
and the associated \emph{free energy functional}
\[
\mathcal F_{\beta,W}[f]:=\ix{f\,\log f}+\beta\ix{W\,f}+\frac12\ix{\phi\,f}
\]
where $\phi$ is given in terms of $f>0$ by~\eqref{P}. With previous notations, $\mathcal F_\beta=\mathcal F_{\beta,V}$. Our last result is that the asymptotic behaviour obtained from~\eqref{W} is enough to decide whether $\mathcal F_{\beta,W}$ is bounded from below or not. The precise result goes as follows.
\begin{theorem}\label{Thm:confinement} Under Assumption~\eqref{W}, $\mathcal F_{\beta,W}$ defined as above is bounded from below if either $\varepsilon=+1$ and $\beta>1+\frac M{8\,\pi}$, or $\varepsilon=-1$, $\beta>1-\frac M{8\,\pi}$ and $M\le8\,\pi$. The result is also true in the limit case if $(W-\beta\,V)\in\mathrm L^\infty(\R^2)$ and either $\varepsilon=+1$ and $\beta=1+\frac M{8\,\pi}$, or $\varepsilon=-1$, $\beta=1-\frac M{8\,\pi}$ and $M\le8\,\pi$.\end{theorem}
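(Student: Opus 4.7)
The strategy is to deduce Theorem~\ref{Thm:confinement} from Corollary~\ref{Cor:Main} by a pointwise comparison of the general potential~$W$ with the model potential~$V$, making direct use of Assumption~\eqref{W}. The starting point is the observation that $V$ is bounded below and tends to $+\infty$ at infinity: together with~\eqref{W}, this implies that for every $\delta>0$ the continuous function $(\beta-\delta)\,V-W=V\,\bigl((\beta-\delta)-W/V\bigr)$ tends to $-\infty$ as $|x|\to+\infty$, hence is bounded above on~$\R^2$. One therefore gets a constant $L_\delta\ge0$ such that
\[
W(x)\ge(\beta-\delta)\,V(x)-L_\delta\qquad\forall\,x\in\R^2.
\]

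I would then integrate this inequality against $f\ge0$ and plug the resulting estimate into $\mathcal F_{\beta,W}$, which gives a lower bound of the form
\[
\mathcal F_{\beta,W}[f]\ge\mathcal F_{\beta-\delta}[f]-C_{W,\delta}\,M
\]
where $\mathcal F_{\beta-\delta}$ is the functional of Corollary~\ref{Cor:Main} with coefficient $\beta-\delta$ in front of $V$, and $C_{W,\delta}$ depends on $W$ and $\delta$ but not on~$f$. In the strict cases $\beta>1+M/(8\,\pi)$ (resp.\ $\beta>1-M/(8\,\pi)$ with $M\le8\,\pi$), I would choose $\delta>0$ small enough that $\beta-\delta$ still satisfies the hypothesis of Corollary~\ref{Cor:Main}; the latter then provides a uniform lower bound on $\mathcal F_{\beta-\delta}[f]$, and hence on~$\mathcal F_{\beta,W}[f]$.

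The borderline cases $\beta=1\pm M/(8\,\pi)$ leave no room to shrink $\beta$ by $\delta$, so I would instead make direct use of the extra hypothesis $(W-\beta V)\in\mathrm L^\infty(\R^2)$. Splitting
\[
\ix{Wf}=\beta\,\ix{Vf}+\ix{(W-\beta V)\,f}\ge\beta\,\ix{Vf}-\nrm{W-\beta V}{\infty}\,M
\]
yields $\mathcal F_{\beta,W}[f]\ge\mathcal F_\beta[f]-\nrm{W-\beta V}{\infty}\,M$, and Corollary~\ref{Cor:Main} applied at the endpoint value of~$\beta$ concludes the argument.

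The only genuine technical point is the uniform pointwise comparison in the first step; once it is in hand, the rest is a direct reduction. The necessity of a strict inequality in the generic case, and of the $\mathrm L^\infty$ assumption at the boundary, is dictated precisely by the margin lost when replacing~$W$ by $(\beta-\delta)\,V$.
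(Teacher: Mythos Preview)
Your proof is correct and follows essentially the same route as the paper: both reduce to Corollary~\ref{Cor:Main} via the pointwise minorisation $W\ge\tilde\beta\,V-C$ for a suitable $\tilde\beta<\beta$ (the paper phrases this as $W\ge\tilde\beta\,V-(\tilde\beta\,V-W)_+$ and uses that $(\tilde\beta\,V-W)_+$ is bounded under~\eqref{W}), and both handle the borderline case identically through the $\mathrm L^\infty$ hypothesis. Your write-up is in fact more explicit than the paper's terse sketch, and your care in choosing $\delta$ so that $\beta-\delta$ still meets the threshold of Corollary~\ref{Cor:Main} is exactly the right point.
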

\begin{proof} If $(W-\beta\,V)\in\mathrm L^\infty(\R^2)$, we can write that
\[
\mathcal F_{\beta,W}[f]\ge\mathcal F_\beta[f]-\,M\,\nrm{W-\beta\,V}{\mathrm L^\infty(\R^2)}\,.
\]
This completes the proof in the limit case. Otherwise, we redo the argument using $\tilde\beta\,V-\(\tilde\beta\,V-W\)_+$ for some $\tilde\beta\in(0,\beta)$ if $\varepsilon=-1$, and for some $\tilde\beta\in\(1+\frac M{8\,\pi},\beta\)$ if $\varepsilon=+1$.\end{proof}

\addtolength{\textheight}{1cm}
\noindent{\bf Acknowledgment:} {\small This work has been partially supported by the Project EFI (ANR-17-CE40-0030) of the French National Research Agency (ANR). The authors thank L.~Jeanjean who pointed them some typographical errors. \copyright\,2019 by the authors. This paper may be reproduced, in its entirety, for non-commercial purposes.}

\end{document}